\DeclareMathOperator{\wt}{wt}
\newcommand{\nc}{\newcommand}
\nc{\op}{\oplus} \nc{\pv}{P^{\vee}}
\newcommand{\C}{\mathbb{C}}
\newcommand{\Z}{\mathbb{Z}}
\newcommand{\ZZ}{\mathbb{Z}_{\geq 0}}
\newcommand{\la}{\lambda}
\newcommand{\ep}{\varepsilon}
\newcommand{\BB}{B}
\nc{\B}{\mathbf{B}} \nc{\V}{\mathbf{V}}
\nc{\nbinom}[2]{\genfrac{}{}{0pt}{1}{{#1}}{{#2}}}
\nc{\qbinom}[2]{\left[\genfrac{}{}{0pt}{1}{{#1}}{{#2}}\right]}
\nc{\ft}{\tilde{f}} 
\nc{\et}{\tilde{e}} 
\nc{\Y}{\mathbf{Y}}
\nc{\ra}{\rightarrow} 
\nc{\vep}{\varepsilon} 
\nc{\vp}{\varphi}
\nc{\h}{\mathfrak{h}} 
\nc{\oP}{\overline{P}}
\nc{\Fit}{\tilde{F}_i} 
\nc{\Eit}{\tilde{E}_i}
\nc{\fit}{\tilde{f}_i} 
\nc{\eit}{\tilde{e}_i}
\newtheorem{theorem}{Theorem}[section]
\newtheorem{lemma}[theorem]{Lemma}
\newtheorem{proposition}[theorem]{Proposition}
\theoremstyle{definition}
\newtheorem{definition}[theorem]{Definition}
\theoremstyle{remark}
\numberwithin{equation}{section}
\begin{document}

% \title[short text for running head]{full title}
\title{A note on  $U_q(D_4^{(3)})$ - Demazure crystals}

%    Only \author and \address are required; other information is
%    optional.  Remove any unused author tags.

%    author one information
% \author[short version for running head]{name for top of paper}
\author[A. Armstrong]{Alyssa M. Armstrong}
\address{Department of Mathematics, North Carolina State University,  Raleigh,  
NC 27695-8205}
\email{amarmstr@ncsu.edu}

%    author two information
\author[K.C. Misra]{ Kailash C. Misra}
\address{Department of Mathematics, North Carolina State University,  Raleigh,  
NC 27695-8205}
\email{misra@ncsu.edu}
\thanks{Partially supported by NSA grant, H98230-12-1-0248.}

%    \subjclass is required.
\subjclass[2010]{Primary 17B37,17B10; Secondary 17B67}
%    The 2010 edition of the Mathematics Subject Classification is
%    now available.  If you are citing a classification from the
%    new scheme, use the following input coding instead.
%\subjclass[2010]{Primary }

%\date{}

%\dedicatory{}

%    "Communicated by" -- provide editor's name; required.
%\commby{Gail R. Letzter}

%    Abstract is required.
\begin{abstract}
We show that there exists a suitable sequence $\{w^{(k)}\}_{k \ge 0}$ of Weyl group elements for the perfect crystal $B = B^{1,3l}$ such that the path realizations of the Demazure crystals $B_{w^{(k)}}(l\Lambda_2)$ for the quantum affine algebra $U_q(D_4^{(3)})$ have tensor product like structure with mixing index $\kappa =1$.
\end{abstract}

\maketitle

%    Text of article.
\section{Introduction}
\setcounter{equation}{0}

Consider the quantum affine algebra  $U_q(\mathfrak g)$ (cf. \cite{Lu2}) associated with the affine Lie algebra $\mathfrak g$ (cf. \cite{Kac}).
For a dominant integral weight $\lambda$ of level $l > 0$, let $V(\lambda)$ denote the integrable highest weight $U_q(\mathfrak g)$-module with highest weight $\lambda$ and the pair $(L(\lambda), B(\lambda))$ denote its crystal base (\cite{Ka1}, \cite{Ka2}, \cite{Lu1}). The crystal $B(\lambda)$ has many interesting combinatorial properties and can be realized as elements (called {\it paths}) in the semi-infinite tensor product $\cdots \otimes B \otimes B \otimes B$ where $B$ is a perfect crystal of level $l$ for $U_q(\mathfrak g)$ \cite{KMN1}. A perfect crystal $B$ of level $l$ for the quantum affine algebra $U_q(\mathfrak g)$ can be thought of as a crystal for a level zero representation of the derived subalgebra $U'_q(\mathfrak g)$ with certain properties. 

Let $W$ denote the Weyl group for $\mathfrak g$ and $U^+_q(\mathfrak g)$ denote the positive part of $U_q(\mathfrak g)$. For each $w \in W$, the $U^+_q(\mathfrak g)$-submodule $V_w(\lambda)$ generated by the one-dimensional extremal weight space $V(\lambda)_{w\lambda}$ of $V(\lambda)$ is called a Demazure module. In \cite{Ka3}, Kashiwara showed that the Demazure crystal $B_w(\lambda)$ for the Demazure module $V_w(\lambda)$ is a subset of the crystal $B(\lambda)$ for $V(\lambda)$ satisfying a certain recursive property. In \cite{KMOU}, it has been shown that under certain conditions,  the Demazure crystal $B_w(\Lambda)$ as a subset of $\cdots \otimes B \otimes B \otimes B$ has tensor product-like structure. In this realization a certain parameter $\kappa$, called the {\it mixing index} enters the picture. It is conjectured that for all quantum affine algebras $\kappa \leq 2$. It is known that for $\lambda = l\Lambda$, where $\Lambda$ is a dominant weight of level $1$ the mixing index $\kappa = 1$ for ${\mathfrak g} = A_n^{(1)}, B_n^{(1)}, C_n^{(1)}, D_n^{(1)}, A_{2n-1}^{(2)}, D_{n+1}^{(2)}, A_{2n}^{(2)}$ \cite{KMOTU}, $D_4^{(3)}$ \cite{M} and $G_2^{(1)}$ \cite{JM}.

In this paper using the perfect crystal $B= B^{1,3l}$ of level $3l$ constructed in \cite{KMOY} for ${\mathfrak g} = D_4^{(3)}$ we show that  there exists a suitable sequence $\{w^{(k)}\}_{k \ge 0}$ of Weyl group elements in $W$ such that the Demazure crystals $B_{w^{(k)}}(l\Lambda_2)$ has tensor product like structures with $\kappa = 1$. We observe that the dominant weight $\Lambda_2$ has level $3$.  

%%%%%%%%%%%%%%%%%%%%%%%%%%%%%%%%%%%%%%%%%%%%%%%%%%%%%%%%%%%%%%%%%%%%%%%%%%%%%%%%%%%%%%%%%%%%%%%%%%%%%%%%%%%%%%%%%%

\section{Quantum affine algebras and the perfect crystals}
\label{affine crystals}
\setcounter{equation}{0}

In this section we recall necessary facts in crystal base theory for quantum affine algebras. Our basic references for this section are \cite{Kac}, \cite{Lu2}, \cite{Ka2}, \cite{KMN1}, and \cite{KKM}.

Let $I = \{0,1,...,n\}$ be the index set and let $A=(a_{ij})_{i,j
\in I}$ be an affine Cartan matrix and $D={\rm diag}(s_0, s_1, \ldots,s_n)$ be a diagonal matrix with all $s_i \in \mathbb Z_{>0}$ such that $DA$ is symmetric. The {\it dual weight lattice} $P^{\vee}$ is defined to be the free abelian group
$P^{\vee} = \Z h_0 \oplus \Z h_1 \oplus \cdots \oplus \Z h_n \oplus
\Z d$ of rank $n+2$, whose complexification $\mathfrak{h}=\C \otimes
P^{\vee}$ is called the {\it Cartan subalgebra}. We define the
linear functionals $\alpha_i$ and $\Lambda_i$ $(i \in I)$ on
$\mathfrak{h}$ by
$$\alpha_i(h_j)=a_{ji}, \ \ \alpha_i(d)=\delta_{i0}, \ \
\Lambda_i (h_j)=\delta_{ij}, \ \   \Lambda_i(d)=0 \ \  (i,j \in I ).$$ 
The $\alpha_i$'s are called the {\it simple roots} and the $\Lambda_i$'s are
called the {\it fundamental weights}. We denote by
$\Pi=\{\alpha_i~|~i\in I\}$ the set of simple roots. We also define
the {\it affine weight lattice} to be $P=\{\lambda \in
\mathfrak{h}^* ~|~ \lambda(P^{\vee})\subset \Z\}$. The quadruple
$(A, P^{\vee}, \Pi, P)$ is called an {\it affine Cartan datum}. We
denote by $\mathfrak{g}$ the affine Kac-Moody algebra corresponding
to the affine Cartan datum $(A, P^{\vee}, \Pi, P)$ (see
\cite{Kac}). Let $\delta$ denote the {\it null root} and $c$
denote the {\it canonical central element} for $\mathfrak{g}$ (see
\cite[Ch. 4]{Kac}).  Now the affine weight lattice can be written as
$P = \mathbb{Z} \Lambda_0 \oplus \mathbb{Z} \Lambda_1 \oplus \cdots
\oplus \mathbb{Z}\Lambda_n \oplus \mathbb{Z}\delta $. Let $P^{+} =
\{ \lambda \in P ~|~ \lambda(h_i) \geq 0$ for all $i \in I\}$. The
elements of $P$ are called the {\it affine weights} and the elements
of $P^{+}$ are called the {\it affine dominant integral weights}.

Let $\bar{P}^{\vee} = \mathbb{Z}h_0 \oplus \cdots \oplus
\mathbb{Z}h_n$, $\bar{\mathfrak{h}} = \mathbb{C}
\otimes_{\mathbb{Z}} \bar{P}^{\vee}$, $\bar P = \mathbb{Z} \Lambda_0
\oplus \mathbb{Z} \Lambda_1 \oplus \cdots \oplus
\mathbb{Z}\Lambda_n$ and $\bar{P}^{+} = \{\lambda \in \bar{P} ~|~
\lambda(h_i) \geq 0$ for all $i \in I\}$. The elements of $\bar{P}$
are called the {\it classical weights} and the elements of
$\bar{P}^{+}$ are called the {\it classical dominant integral
weights}. The {\it level} of a (classical) dominant integral weight
$\lambda$ is defined to be $l = \lambda(c)$.  We call the
quadruple $(A, \bar{P}^{\vee}, \Pi, \bar{P})$ the {\it classical
Cartan datum}.

For the convenience of notation, we define
$\left[k\right]_x=\dfrac{x^k-x^{-k}}{x-x^{-1}}$, where $k$ is an
integer and $x$ is a symbol. We also define $\left[\begin{matrix} m
\\ k \end{matrix}
\right]_x=\dfrac{\left[m\right]_x!}{\left[k\right]_x!\left[m-k\right]_x!}$,
where $m$ and $k$ are nonnegative integers, $m \geq k \geq 0$,
$\left[k\right]_x!=\left[k\right]_x\left[k-1\right]_x \cdots
\left[1\right]_x$ and $\left[0\right]_x!=1$.

The {\it quantum affine algebra} $U_q(\mathfrak{g})$ is the quantum
group associated with the affine Cartan datum $(A, P^{\vee}, \Pi,
P)$. That is, it is the associative algebra over $\C(q)$ with unity
generated by $e_i, f_i(i\in I)$ and $q^h( h \in P^{\vee})$
satisfying the following defining relations:
\begin{itemize}

\item[(i)] $q^0 = 1, q^h q^{h^{\prime}} = q^{h + h^{\prime}}$  for all  $h,h^{\prime} \in P^{\vee}$,

\item[(ii)] $q^h e_i q^{-h} = q^{\alpha_i(h)}e_i$,  $q^h f_i q^{-h} = q^{-\alpha_i(h)}f_i$  for  $h \in P^{\vee}$,

\item[(iii)] $e_i f_j - f_j e_i = \delta_{ij} \dfrac{K_i - K_i^{-1}}{q_i - q_i^{-1}}$  for  $i,j \in I$,  where  $q_i = q^{s_i}$  and  $K_i = q^{s_i h_i}$,

\item[(iv)]$\displaystyle\sum_{k=0}^{1-a_{ij}}(-1)^k e_{i}^{(1-a_{ij}-k)} e_j e_{i}^{(k)} = 0 $ for $i \ne j$,

\item[(v)]$\displaystyle\sum_{k=0}^{1-a_{ij}}(-1)^k f_{i}^{(1-a_{ij}-k)} f_j f_{i}^{(k)} = 0 $  for $i \ne j$,
\end{itemize}

where $e_i^{(k)}= \frac{e_i^k}{\left[k\right]_{q_i}!}$,
and $f_i^{(k)} = \frac{f_i^k}{\left[ k \right]_{q_i}!}$.
We denote by $U_q'(\mathfrak{g})$ the subalgebra of
$U_q(\mathfrak{g})$ generated by $e_i,f_i,K_i^{\pm1}(i \in I)$. The algebra $U_q'(\mathfrak{g})$ can be regarded as
the quantum group associated with the classical Cartan datum $(A, \bar{P}^{\vee}, \Pi, \bar{P})$.

\begin{definition}
An {\it affine crystal} (respectively, a {\it classical crystal}) is
a set $\BB$ together with the maps $\wt : \BB \rightarrow P$
(respectively, $\wt: \BB \rightarrow \bar{P}$), $\tilde{e_i},
\tilde{f_i}: \BB \rightarrow \BB \cup \{ 0 \}$ and $\varepsilon_i,
\varphi_i : \BB \rightarrow \mathbb{Z} \cup \{-\infty\}$  $(i \in I)
$ satisfying the following conditions:
\begin{itemize}
\item[(i)] $\varphi_{i}(b) = \varepsilon_{i}(b) + \langle h_i, \wt
(b) \rangle$ \ for all  $ i \in I$,

\item[(ii)] $ \wt(\tilde {e_i} b) = \wt (b) + \alpha_i$ \ if  $\tilde{e_i} b \in \BB$,

\item[(iii)] $ \wt(\tilde{f_i} b) = \wt(b) - \alpha_i$ \ if  $\tilde{f_i}b \in \BB$,

\item[(iv)] $\varepsilon_i(\tilde{e_i} b) = \varepsilon_i(b) - 1$, \
$\varphi_i(\tilde{e_i} b) = \varphi_i(b) + 1$ \ if  $\tilde{e_i}b \in
\BB$,

\item[(v)] $\varepsilon_i(\tilde{f_i} b) = \varepsilon_i(b) + 1$, \
$\varphi_i(\tilde{f_i} b) = \varphi_i(b) - 1$ \ if  $\tilde{f_i}b \in
\BB$,

\item[(vi)] $\tilde{f_{i}} b = b^{'}$ \ \ if and only if \ \ $b =
\tilde{e_i} b^{'}$ for  $b,b^{'}\in \BB, i \in I $,

\item[(vii)] If $\varphi_i(b) = -\infty$  for  $b\in \BB$ , then
$\tilde{e_i} b = \tilde{f_i} b = 0$.

\end{itemize}
\end{definition}

\begin{definition}
Let $\BB_1$ and $\BB_2$ be affine or classical crystals.
% associated with the same Cartan datum.
A {\it crystal morphism} (or {\it
morphism of crystals}) $\Psi : \BB_1 \rightarrow \BB_2$ is a map
$\Psi : \BB_1 \cup \{0\} \rightarrow \BB_2 \cup \{0\}$ such that
\begin{enumerate}
\item[(i)] $\Psi(0)=0$,
\item[(ii)] if $b \in \BB_1$ and $\Psi(b) \in \BB_2$, then $\wt(\Psi(b))=\wt(b)$, $\varepsilon_i(\Psi(b))=\varepsilon_i(b)$, and $\varphi_i(\Psi(b))=\varphi_i(b)$ for all $i \in I$,
\item[(iii)] if $b, b' \in \BB_1$, $\Psi(b), \Psi(b') \in \BB_2$ and $\fit b=b'$, then $\fit \Psi(b)=\Psi(b')$ and $\Psi(b)=\eit \Psi(b')$ for all $i \in I$.
\end{enumerate}
A crystal morphism $\Psi : \BB_1 \rightarrow \BB_2$ is called an {\it isomorphism} if it is a bijection from $\BB_1 \cup \{0\}$ to $\BB_2 \cup \{0\}$.
\end{definition}

For crystals $\BB_1$ and $\BB_2 $, we define the {\it tensor
product} $\BB_1 \otimes \BB_2$ to be the set $\BB_1 \times \BB_2 $
whose crystal structure is given as follows:

\begin{equation*}
\begin{aligned}
\tilde {e_i}(b_1 \otimes b_2)&= \begin{cases} \tilde {e_i} b_1 \otimes b_2 &\text{if \;$\varphi_i(b_1)\ge \varepsilon_i(b_2)$}, \\
b_1\otimes \tilde {e_i} b_2 & \text{if\; $\varphi_i(b_1) < \varepsilon_i(b_2)$}, \end{cases} \\
\tilde {f_i}(b_1 \otimes b_2)&= \begin{cases} \tilde {f_i} b_1 \otimes b_2
&\text{if \; $\varphi_i(b_1) > \varepsilon_i(b_2)$}, \\
b_1\otimes \tilde {f_i} b_2 & \text{if \; $\varphi_i(b_1) \le
\varepsilon_i(b_2)$},
\end{cases}\\
\wt(b_1 \otimes b_2)&= \wt(b_1)+\wt(b_2),\\
\ep_{i}(b_1 \otimes b_2)&= \max(\ep_{i}(b_1), \ep_i(b_2) -
\langle h_i, \wt(b_1) \rangle ),\\
\varphi_{i}(b_1 \otimes b_2)& = \max(\varphi_{i}(b_2), \varphi_i(b_1)
+\langle h_i, \wt(b_2)\rangle ).
\end{aligned}
\end{equation*}\\

Let $\BB$ be a classical crystal. For an element $b \in \BB$, we
define
\begin{eqnarray*}
\varepsilon(b) = \displaystyle\sum_{i \in I} \varepsilon_i(b)\Lambda_i, &
\varphi(b) = \displaystyle\sum_{i \in I} \varphi_i(b)\Lambda_i.
\end{eqnarray*}

\begin{definition} \label{defi:perfect crystal}
Let $l$ be a positive integer. A classical crystal $\BB$ is
called a \textit{perfect crystal of level $l$} if

\begin{itemize}
\item[(1)] there exists a finite dimensional $U'_q (\mathfrak{g})$-module with a crystal basis whose crystal graph is isomorphic to $\BB$,

\item[(2)] $\BB \otimes \BB$ is connected,

\item[(3)] there exists a classical weight $ \lambda_0 \in \bar{P}$
such that $\displaystyle \wt(\BB) \subset \lambda_0 + \sum_{i \ne 0}
\mathbb{Z}_{\le0} \: \alpha_i, \;\\
\#(\BB_{\lambda_0})=1 $, where
$\BB_{\lambda_0}=\{ b \in \BB ~ | ~ \wt(b)=\lambda_0 \}$,

\item[(4)] for any $b \in \BB , ~  \langle c, \varepsilon(b) \rangle
\ge l $,

\item[(5)] for any $ \lambda \in \bar{P} ^{+}$ with $\la(c)= l$, there
exist  unique $ b^{\lambda} , b_{\lambda} \in \BB $ such that
$\varepsilon(b^{\lambda}) = \lambda = \varphi(b_{\lambda}).$

\end{itemize}
\end{definition}

The following crystal isomorphism theorem plays a fundamental role
in the theory of perfect crystals.

\begin{theorem}\cite{KMN1}
Let $\BB$ be a perfect crystal of level $l$ ($l \in
\mathbb{Z}_{\geq 0}$). For any $\lambda \in \bar{P}^+$ with
$\lambda(c)= l$, there exists a unique classical crystal
isomorphism
\begin{equation*}
\begin{aligned}
\Psi :\BB(\lambda) \stackrel{\sim}{\longrightarrow}
\BB(\ep(b_\lambda)) \otimes \BB &
&\text{given by}& &  u_\lambda  \longmapsto  u_{\ep(b_\lambda)} \otimes b_\lambda ,
\end{aligned}
\end{equation*}
where $u_\lambda$ is the highest weight vector in $\BB(\lambda)$ and $b_\lambda$ is the unique vector in $\BB$ such that $\varphi(b_{\la})=\la $.
\end{theorem}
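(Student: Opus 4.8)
The plan is to exhibit the right-hand side as a connected crystal with a single highest weight vector of weight $\la$, and to let perfectness guarantee that the component generated by that vector is the whole tensor product. Write $\mu = \ep(b_\la)$. First I would check that $\mu$ is a level-$l$ dominant weight: since $\BB$ is the crystal of a finite dimensional, hence level zero, $U_q'(\g)$-module we have $\langle c, \wt(b)\rangle = 0$ for all $b\in\BB$, so pairing the identity $\varphi_i(b)=\ep_i(b)+\langle h_i,\wt(b)\rangle$ against $c$ gives $\langle c,\ep(b)\rangle=\langle c,\varphi(b)\rangle$; at $b=b_\la$ this reads $\langle c,\mu\rangle=\langle c,\varphi(b_\la)\rangle=\langle c,\la\rangle=l$. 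Reading the same identity coefficientwise over $i$ gives $\wt(u_\mu\ot b_\la)=\mu+\wt(b_\la)=\sum_i\varphi_i(b_\la)\La_i=\la$, so $u_\mu\ot b_\la$ indeed has weight $\la$.

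Next I would identify all highest weight vectors of $\BB(\mu)\ot\BB$. For $x\ot b$ I would run the tensor product rule for $\eit$: if $\varphi_i(x)<\ep_i(b)$ for some $i$ then $\eit(x\ot b)=x\ot\eit b$, whose vanishing would force $\ep_i(b)=0$ and hence $\varphi_i(x)<0$, impossible in a normal crystal; so for every $i$ we must have $\varphi_i(x)\ge\ep_i(b)$ and $\eit x=0$, i.e. $\ep_i(x)=0$. The latter for all $i$ says $x=u_\mu$, and the former becomes $\ep_i(b)\le\langle h_i,\mu\rangle$ for all $i$. Pairing against $c$ (whose coefficients are strictly positive) gives $\langle c,\ep(b)\rangle\le\langle c,\mu\rangle=l$, while perfectness condition (4) gives the reverse inequality; hence $\ep(b)=\mu$. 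By the uniqueness in perfectness condition (5) the only such $b$ is $b_\la$ (which satisfies $\ep(b_\la)=\mu$), so $u_\mu\ot b_\la$ is the \emph{unique} highest weight vector of $\BB(\mu)\ot\BB$.

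Finally I would promote this to the stated isomorphism. The tensor product $\BB(\mu)\ot\BB$ is a normal classical crystal, so it decomposes into connected components, each of which carries a unique highest weight element and is isomorphic to $\BB(\nu)$ for some dominant $\nu$. Uniqueness of the highest weight vector therefore forces $\BB(\mu)\ot\BB$ to be a single connected component, which must be isomorphic to $\BB(\la)$ because its highest weight element has weight $\la$; the resulting isomorphism $\Psi$ sends $u_\la$ to $u_\mu\ot b_\la$, and it is unique since any morphism out of $\BB(\la)$ is determined by the image of $u_\la$. I expect the surjectivity/connectedness step to be the crux: it is exactly here that both perfectness conditions are indispensable, with (4) providing the lower bound that collapses $\ep(b)$ onto $\mu$ and (5) selecting the unique candidate, and one must also invoke the structural decomposition of a normal crystal in order to turn ``unique highest weight vector'' into ``connected and isomorphic to $\BB(\la)$.''
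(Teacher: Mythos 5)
The paper itself contains no proof of this statement---it is quoted from \cite{KMN1} with a citation---so your proposal can only be judged on its own merits, not against an in-paper argument. Your first three steps are correct and are the standard ones: the level computation $\langle c,\ep(b_\la)\rangle=\langle c,\varphi(b_\la)\rangle$ (using that a perfect crystal has level-zero weights), the identity $\wt(b)=\varphi(b)-\ep(b)$ in $\bar P$, and the identification of the highest weight elements of $\BB(\mu)\otimes\BB$, where perfectness (4) together with the strict positivity of the coefficients of $c$ collapses $\ep(b)$ onto $\mu$ coordinatewise and (5) then pins down $b=b_\la$. (One small omission: before invoking (5) you should note $\mu=\ep(b_\la)$ is \emph{dominant}, which holds since $\ep_i\ge 0$.)

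The last step, however, is a genuine gap, and it is exactly at the point you yourself call the crux. The principle you invoke---``a normal classical crystal decomposes into connected components, each carrying a unique highest weight element and isomorphic to some $\BB(\nu)$''---is false for classical ($U_q'(\mathfrak{g})$-) crystals, and the perfect crystal $\BB$ itself is a counterexample: it is normal, finite and connected, yet it contains no element killed by all $\eit$ (condition (4) forces $\langle c,\ep(b)\rangle\ge l>0$, so $\ep(b)\neq 0$ for every $b$), and it is isomorphic to no $\BB(\nu)$. The failure comes from the level-zero phenomenon: in $\bar P$ the null root vanishes, so raising operators can cycle and a component need not be bounded above; hence ``unique highest weight vector'' does not imply ``connected,'' because a priori $\BB(\mu)\otimes\BB$ could have extra components containing no highest weight element at all. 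Ruling this out is the real content of the theorem. One can show without much trouble that every component contains some $u_\mu\otimes b$ (apply $\et_i^{\rm max}$ repeatedly; this strictly raises the left factor, whose affine weights are bounded by $\mu$), but from $u_\mu\otimes b$ the raising operators act on the \emph{right} factor, and showing this process terminates at $u_\mu\otimes b_\la$ rather than cycling forever is where the published proofs need substantially more machinery (the affinization $\BB_{\rm aff}$, energy functions, and the module $V(\mu)\otimes V$). A telltale sign of the gap is that your argument never uses perfectness condition (2), nor even connectedness of $\BB$; yet if $\BB$ were disconnected, say $\BB=\BB_1\sqcup\BB_2$ with $b_\la\in\BB_1$, then $\BB(\mu)\otimes\BB_2$ would be a nonempty union of components that can never reach the highest weight vector, and the theorem would fail---so any complete proof must exploit connectedness somewhere. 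Finally, even for a component that does contain a highest weight element, identifying it with $\BB(\nu)$ requires that the crystal arise from a module with crystal basis (Kashiwara's tensor product theorem), not abstract normality alone.
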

 Set $\la_0 =\la, \ \la_{k+1}=\ep(b_{\la_k}), \ b_0=b_{\la_0}, \ b_{k+1}=b_{\la_{k+1}}$.
 Applying the above crystal isomorphism repeatedly, we get a sequence of crystal isomorphisms
 \begin{equation*}
\begin{array}{ccccccc} \BB(\lambda) &
\stackrel{\sim}{\longrightarrow} & \BB(\lambda_1) \otimes \BB &
\stackrel{\sim}{\longrightarrow} & \BB(\lambda_2)\otimes \BB \otimes
\BB & \stackrel{\sim}{\longrightarrow} & \cdots
\\
u_\lambda & \longmapsto & u_{\lambda_1} \otimes b_0 & \longmapsto &
u_{\lambda_2} \otimes b_1 \otimes b_0 & \longmapsto & \cdots.
\end{array} \end{equation*}
 In this process, we get an infinite sequence $\mathbf{p}_\lambda =(b_k)^{\infty} _{k=0} \in \BB^{\otimes \infty}
 $, which is called the \textit{ground-state path of weight} $\la$.
 Let $\mathcal{P}(\la):=\{\mathbf{p}=(p(k))^{\infty}_{k=0} \in \BB^{\otimes \infty} \;|\; p(k) \in \BB, p(k)=b_k \ \text{for all} \ k \gg 0  \}$.
 The elements of $\mathcal{P}(\la)$ are called the $\la$-\textit{paths}. The following result gives the {\it path realization} of $\BB(\lambda)$.

\begin{proposition}\cite{KMN1}
There exists an isomorphism of classical crystals
\begin{equation*}
\begin{aligned}
\Psi_{\la} :\BB(\lambda) & \stackrel{\sim}{\longrightarrow} &
\mathcal{P}({\la}) &
&\text{given by}& &  u_\lambda & \longmapsto & \mathbf{p}_\lambda  ,
\end{aligned}
\end{equation*}
where $u_{\la}$ is the highest weight vector in $\mathcal B(\la)$.
\label{path_realization}
\end{proposition}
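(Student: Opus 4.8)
The plan is to realize $\Psi_\la$ as the stabilized limit of the iterated crystal isomorphisms produced by the preceding theorem, and then to verify that this limit is a bijection intertwining the crystal operators. First I would iterate the isomorphism theorem: composing the first $N$ isomorphisms in the displayed chain gives, for each $N \ge 1$, a classical crystal isomorphism $\Psi^{(N)} : \BB(\la) \stackrel{\sim}{\ra} \BB(\la_N) \ot \BB^{\ot N}$ carrying $u_\la$ to $u_{\la_N}\ot b_{N-1}\ot \cdots \ot b_0$. Writing $\Psi^{(N)}(b)=u^{(N)}\ot p_N(N-1)\ot\cdots\ot p_N(0)$ for $b\in\BB(\la)$, the key point is that $\Psi^{(N+1)}$ is obtained from $\Psi^{(N)}$ by applying the single-step isomorphism $\BB(\la_N)\stackrel{\sim}{\ra}\BB(\la_{N+1})\ot\BB$ only to the leftmost factor $u^{(N)}$. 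Hence the tail $p_N(N-1)\ot\cdots\ot p_N(0)$ is unchanged in passing from $N$ to $N+1$, so $p_N(k)$ is independent of $N$ for $N>k$; I may therefore set $p(k):=p_N(k)$ and define $\Psi_\la(b)=(p(k))_{k=0}^{\infty}$. By construction $\Psi_\la(u_\la)=\mathbf{p}_\la$.

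Next I would verify $\Psi_\la(b)\in\mathcal P(\la)$, i.e.\ $p(k)=b_k$ for $k\gg0$. Every $b\in\BB(\la)$ is obtained from $u_\la$ by finitely many lowering operators, $b=\tilde f_{i_1}\cdots\tilde f_{i_r}u_\la$, so $\Psi^{(N)}(b)$ is obtained from $u_{\la_N}\ot b_{N-1}\ot\cdots\ot b_0$ by the same operators. Each $\fit$ acts on a single tensor factor, and the tensor product rule, together with $u_{\la_N}$ being of highest weight, keeps this activity within boundedly many of the small-index factors; hence only boundedly many entries can differ from the ground-state entries, and for $N$ large the large-index factors remain equal to $b_k$. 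Letting $N\to\infty$ gives $p(k)=b_k$ for all $k$ past a bound depending only on $r$, so $\Psi_\la(b)\in\mathcal P(\la)$.

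It remains to check that $\Psi_\la$ is a crystal isomorphism. It preserves $\wt$, $\ep_i$, and $\vp_i$ because each $\Psi^{(N)}$ does and because these statistics of a path are computed from finitely many factors. For compatibility with $\eit,\fit$ I would first show that these operators are well-defined on $\mathcal P(\la)$: by perfectness---in particular condition (4), $\langle c,\ep(b)\rangle\ge l$, applied to the ground-state entries---the tensor product rule forces $\eit$ and $\fit$ to act on only finitely many of the semi-infinite factors and to preserve the eventual agreement with $\mathbf{p}_\la$. Then, taking $N$ large enough that this finite action does not reach the truncation factor $u^{(N)}$, the action of $\eit,\fit$ on $\Psi_\la(b)$ matches their action on $\BB(\la_N)\ot\BB^{\ot N}$, which $\Psi^{(N)}$ intertwines by the previous theorem. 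Injectivity of $\Psi_\la$ follows from that of each $\Psi^{(N)}$, and surjectivity from the observation that any $\mathbf{p}\in\mathcal P(\la)$ agrees with $\mathbf{p}_\la$ for $k\ge N_0$, so the element $u_{\la_{N_0}}\ot p(N_0-1)\ot\cdots\ot p(0)$ has a preimage under $(\Psi^{(N_0)})^{-1}$ whose image under $\Psi_\la$ is $\mathbf{p}$.

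I expect the main obstacle to be the third paragraph: proving that $\eit$ and $\fit$ genuinely act on only finitely many tensor factors of a path and that $\Psi_\la$ correctly intertwines them. This is exactly where the perfectness hypotheses are used in an essential way, and it is what makes the semi-infinite tensor product $\mathcal P(\la)$ a bona fide crystal rather than a purely formal object.
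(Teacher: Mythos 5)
Your proposal is correct and is essentially the argument the paper points to: the paper does not prove this proposition itself but quotes it from \cite{KMN1}, and its immediately preceding construction of the iterated single-step isomorphisms and the ground-state path is exactly what you stabilize into $\Psi_\la$, following the standard argument of that reference. One small refinement: the localization of $\eit,\fit$ to finitely many tensor factors rests not so much on perfectness condition (4) as on the ground-state relation $\varphi_i(b_{k+1})=\varepsilon_i(b_k)$ (a consequence of condition (5) and the definition $\lambda_{k+1}=\varepsilon(b_{\lambda_k})$), which is what makes the signature-rule ties resolve consistently toward the right.
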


%%%%%%%%%%%%%%%%%%%%%%%%%%%%%%%%%%%%%%%%%%%%%%%%%%%%%%%%%%%%%%%%%%%%%%%%%%%%%%%%%%%%%%%%%%%%%%%%%%%%%%%%%%%%%%%%%%%%%%%%%

\section{$U_q(D_4^{(3)})$- Perfect crystals}
%\lebel{g_2_perfect_crystal}
\setcounter{equation}{0}

In this section we recall the perfect crystal $B^{1,l}$ for the quantum affine algebra
$U_q(D_4^{(3)})$ of level $l>0$ constructed in \cite{KMOY}.

First we fix the data for $D_4^{(3)}$.
Let  $\{\alpha_0, \alpha_1, \alpha_2\}$, 
$\{h_0, h_1, h_2\}$ and 
$\{\Lambda_0, \Lambda_1, \Lambda_2\}$ be the set of 
simple roots, simple coroots and fundamental weights, respectively.
The Cartan matrix $A=(a_{ij})_{i,j=0,1,2}$
is given by
\[A=
\left(
\begin{array}{rrr}
2  & -1 & 0  \\
-1 & 2  & -3 \\
0  & -1 & 2
\end{array}
\right),
\]

and its Dynkin diagram is given by:

\[
{} \circ_0-{}\circ_1 \Lleftarrow \circ_2
\]

The standard null root $\delta$ 
and the canonical central element $c$ are 
given by
\[
\delta=\alpha_0+2\alpha_1+\alpha_2
\quad\text{and}\quad c=h_0+2h_1+3h_2, 
\]
where 
$\alpha_0=2\Lambda_0-\Lambda_1+\delta,\quad
\alpha_1=-\Lambda_0+2\Lambda_1-\Lambda_2,\quad
\alpha_2=-3\Lambda_1+2\Lambda_2.$

For positive integer $l$ define the set

\begin{eqnarray*}
B=B^{1,l}=\left\{
b=(x_1,x_2,x_3,{\bar x}_3,{\bar x}_2,{\bar x}_1)
\in(\ZZ)^6
\left\vert
\begin{array}{l}
x_3\equiv\bar{x}_3\;(\text{mod }2), \\
\sum_{i=1,2} (x_i+{\bar x}_i)+\frac{x_3+{\bar x}_3}{2}
\leq l
\end{array}
\right.
\right\}.
\end{eqnarray*}

For $b=(x_1,x_2,x_3,{\bar x}_3,{\bar x}_2,{\bar x}_1) \in B$ we denote
\begin{equation} \label{def s(b)}
s(b)=x_1+x_2+\frac{x_3+{\bar x}_3}{2}+{\bar x}_2+{\bar x}_1, t(b) = x_2+\frac{x_3+{\bar x}_3}{2} ,
\end{equation}

\begin{equation} \label{z1-4}
z_1={\bar x}_1-x_1, \quad 
z_2={\bar x}_2 -{\bar x}_3, \quad 
z_3=x_3-x_2, \quad 
z_4= \frac{{\bar x}_3-x_3}{2},
\end{equation}

and 
\begin{equation} \label{A}
\mathcal{A}=(0,z_1,z_1+z_2,z_1+z_2+3z_4,z_1+z_2+z_3+3z_4,2z_1+z_2+z_3+3z_4).
\end{equation}

Now we define conditions ($E_1$)-($E_6$) and ($F_1$)-($F_6$) as follows.

\begin{equation} \label{(F)}
%\begin{split}
\begin{cases}
&(F_1)\quad
z_1+z_2+z_3+3z_4\le0, z_1+z_2+3z_4\le0, z_1+z_2\le0, z_1\le0,
\\
&(F_2)\quad
z_1+z_2+z_3+3z_4\le0, z_2+3z_4\le0, z_2\le0, z_1> 0,
\\
&(F_3)\quad 
z_1+z_3+3z_4\le0, z_3+3z_4\le0, z_4\le0, z_2> 0, z_1+z_2> 0,
\\
&(F_4)\quad
z_1+z_2+3z_4> 0, z_2+3z_4> 0, z_4> 0, z_3\le0, z_1+z_3\le0,
\\
&(F_5)\quad
z_1+z_2+z_3+3z_4> 0, z_3+3z_4> 0, z_3> 0, z_1\le0, 
\\
&(F_6)\quad
z_1+z_2+z_3+3z_4> 0, z_1+z_3+3z_4> 0, z_1+z_3> 0, z_1> 0.
%\end{split}
\end{cases}
\end{equation}
($E_i$) ($1\le i\le 6$) is defined from ($F_i$) by replacing $>$ (resp. $\le$) with $\ge$ (resp. $<$).

Then for $b=(x_1,x_2,x_3,{\bar x}_3,{\bar x}_2,{\bar x}_1) \in B$, we define
$\et_i(b), \ft_i(b), \vep_i(b), \vp_i(b), \\
i= 0, 1, 2$ as follows. We use the convention: $(a)_+=\max(a,0)$.

\begin{align*}
\et_0(b)=&
\begin{cases}
(x_1 -1,\ldots) 
& \text{if ($E_1$)}, 
\\
(\ldots,x_3 -1,{\bar x}_3 -1,\ldots,{\bar x}_1 +1) 
& \text{if ($E_2$)}, 
\\
(\ldots,x_3 -2,\ldots,{\bar x}_2 +1,\ldots) 
& \text{if ($E_3$)}, 
\\ 
(\ldots,x_2 -1,\ldots,{\bar x}_3 +2,\ldots) 
& \text{if ($E_4$)}, 
\\
(x_1 -1,\ldots,x_3 +1,{\bar x}_3 +1,\ldots) 
& \text{if ($E_5$)}, 
\\
(\ldots,{\bar x}_1 +1) 
& \text{if ($E_6$)},
\end{cases}
\\
\ft_0(b)=&
\begin{cases}
(x_1 +1,\ldots) 
& \text{if ($F_1$)}, 
\\ 
(\ldots,x_3 +1,{\bar x}_3 +1,\ldots,{\bar x}_1 -1) 
& \text{if ($F_2$)}, 
\\
(\ldots,x_3 +2,\ldots,{\bar x}_2 -1,\ldots) 
& \text{if ($F_3$)}, 
\\
(\ldots,x_2 +1,\ldots,{\bar x}_3 -2,\ldots) 
& \text{if ($F_4$)}, 
\\
(x_1 +1,\ldots,x_3 -1,{\bar x}_3 -1,\ldots) 
& \text{if ($F_5$)}, 
\\
(\ldots,{\bar x}_1 -1)
& \text{if ($F_6$)},
\end{cases}
\end{align*}

\begin{align*}
\et_1(b)=&
\begin{cases}
(\ldots,{\bar x}_2 +1,{\bar x}_1 -1) 
& \text{if $z_2 \geq (-z_3)_+$}, 
\\  
(\ldots,x_3 +1,{\bar x}_3 -1,\ldots) 
& \text{if $z_2 <0\leq z_3$}, 
\\ 
(x_1 +1,x_2 -1,\ldots) 
& \text{if $(z_2)_+ < -z_3$},
\end{cases}
\\
\ft_1(b)=&
\begin{cases}
(x_1 -1,x_2 +1,\ldots) 
& \text{if $(z_2)_+ \leq -z_3$}, 
\\
(\ldots,x_3 -1,{\bar x}_3 +1,\ldots) 
& \text{if $z_2 \leq 0< z_3$}, 
\\
(\ldots,{\bar x}_2 -1,{\bar x}_1 +1) 
& \text {if $z_2 >(-z_3)_+$},
\end{cases}
\\
\et_2(b)=&
\begin{cases}
(\ldots,{\bar x}_3 + 2,{\bar x}_2 - 1,\ldots) 
& \text{if $z_4 \geq 0$}, 
\\
(\ldots,x_2 + 1,x_3 - 2,\ldots) 
& \text{if $z_4 < 0$},
\end{cases}
\\
\ft_2(b)=&
\begin{cases}
(\ldots,x_2 - 1,x_3 + 2,\ldots) 
& \text{if $z_4 \leq 0$}, 
\\
(\ldots,{\bar x}_3 - 2,{\bar x}_2 + 1,\ldots) 
& \text{if $z_4 > 0$}.
\end{cases}
\end{align*}

\begin{align*}
\vep_1(b) = &\  { \bar x}_1+({\bar x}_3-{\bar x}_2+(x_2-x_3)_+)_+,
\quad \vp_1(b) = 
x_1+(x_3-x_2+({\bar x}_2-{\bar x}_3)_+)_+,
\\
\vep_2(b) = &\ {\bar x}_2+\frac{1}{2}(x_3-{\bar x}_3)_+,
\quad \vp_2(b) = x_2+\frac{1}{2}({\bar x}_3-x_3)_+,\\
\vep_0(b) = & \ 
l-s(b)+\max \,\mathcal{A}-(2z_1+z_2+z_3+3z_4),
\quad \vp_0(b) =
l-s(b)+\max\, \mathcal{A}.
\end{align*}

For $b\in B$ 
if $\et_i(b)$ or $\ft_i(b)$ does not belong to 
$B$, namely, if $x_j$ or $\bar{x}_j$
for some $j$ becomes negative or $s(b)$ exceeds $l$, 
we understand it to be $0$.

The following is one of the main results in 
\cite{KMOY}:
\begin{theorem}\cite{KMOY}
For the quantum affine algebra $U_q(D_4^{(3)})$ the set $B= B^{1,l}$ equipped with the maps $\et_i, \ft_i, \vep_i, \vp_i, i=0, 1, 2$ is a perfect crystal of level $l$.
\end{theorem}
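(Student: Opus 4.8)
The plan is to check, in order, that the explicit data $(B,\tilde e_i,\tilde f_i,\varepsilon_i,\varphi_i)$ is a classical crystal in the sense of Definition~2.1 and then that it satisfies the five conditions of Definition~\ref{defi:perfect crystal}. For the crystal axioms I would first write down $\wt(b)$ for a general $b=(x_1,x_2,x_3,\bar x_3,\bar x_2,\bar x_1)$ as an explicit element of $\bar P$ and confirm the compatibility $\varphi_i(b)=\varepsilon_i(b)+\langle h_i,\wt(b)\rangle$ directly from the displayed formulas. The substantive work is a case analysis over the regions $(E_1)$--$(E_6)$, $(F_1)$--$(F_6)$ and the sign conditions on $z_2,z_3,z_4$: in each region one verifies that $\tilde e_i$ and $\tilde f_i$ shift the weight by $\pm\alpha_i$ and that $\varepsilon_i,\varphi_i$ change by the amounts prescribed in (iv)--(v). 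The delicate axiom is (vi), that $\tilde f_i b=b'$ iff $b=\tilde e_i b'$; since each $(E_i)$ is obtained from $(F_i)$ by trading $>$ for $\ge$, I would organize the verification around the shared boundaries between adjacent regions, where this bookkeeping of strict versus non-strict inequalities is exactly what makes the two operators mutually inverse.

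For condition (1) I would use the canonical subalgebra of type $G_2$ obtained by deleting the affine node $0$, whose Cartan matrix is the $\{1,2\}$-submatrix of $A$. Forgetting $\tilde e_0,\tilde f_0$, one shows that $B=B^{1,l}$ is a disjoint union of connected $G_2$-components, each isomorphic to a finite highest-weight crystal $B(\mu)$, and matches this decomposition with the classical decomposition of the relevant finite-dimensional $U_q'(D_4^{(3)})$-module (a Kirillov--Reshetikhin module), the $0$-arrows providing the affinization; this identification is the representation-theoretic heart of (1). Condition (2), connectedness of $B\otimes B$, then reduces to a finite check: locate the classically highest-weight vectors of $B\otimes B$ and show the $0$-arrows link them all. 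For (3) I would exhibit the single extremal tuple realizing a weight $\lambda_0$ and verify $\wt(B)\subset\lambda_0+\sum_{i\neq0}\mathbb Z_{\le0}\,\alpha_i$ together with $\#(B_{\lambda_0})=1$.

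Conditions (4) and (5) carry the real content of perfectness. Since $c=h_0+2h_1+3h_2$ and $\Lambda_0(c)=1,\Lambda_1(c)=2,\Lambda_2(c)=3$, we have $\langle c,\varepsilon(b)\rangle=\varepsilon_0(b)+2\varepsilon_1(b)+3\varepsilon_2(b)$; substituting the explicit formulas for the $\varepsilon_i$, including the term $\max\mathcal A$ in $\varepsilon_0$, I would prove the bound $\langle c,\varepsilon(b)\rangle\ge l$ by splitting into cases according to which coordinate of $\mathcal A$ attains the maximum. I expect this global inequality, together with its companion for $\varphi$, to be the main obstacle, since it must hold uniformly over all of $B$ and the extremal cases are precisely the minimal vectors. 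Finally, for (5) I would parametrize the level-$l$ weights $\lambda\in\bar P^+$ and solve $\varepsilon(b)=\lambda$ and $\varphi(b)=\lambda$ as systems in $x_i,\bar x_i$: the vectors attaining equality in (4) are exactly the candidates, and one checks that each such system has a unique solution $b^\lambda$, respectively $b_\lambda$, in $B$, which yields the required existence and uniqueness.
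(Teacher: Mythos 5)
First, a point about the ground truth here: the paper does not prove this statement at all. It is Theorem 3.1 quoted verbatim from \cite{KMOY} (Kashiwara--Misra--Okado--Yamada, \emph{J. Algebra} \textbf{317} (2007)), and the present paper only uses its conclusion. So the only meaningful comparison is with the proof in that reference. Your outline does match the broad shape of that proof for the combinatorial part: the crystal axioms and perfectness conditions (2)--(5) are indeed verified there by explicit case analysis over the regions $(E_i)$, $(F_i)$ and the signs of $z_1,\dots,z_4$, the pairing $\langle c,\varepsilon(b)\rangle=\varepsilon_0(b)+2\varepsilon_1(b)+3\varepsilon_2(b)$ is exactly the right quantity for (4), and (5) is settled by solving $\varepsilon(b)=\lambda=\varphi(b)$ explicitly, which is where the list of minimal elements $(\alpha,\beta,\beta,\beta,\beta,\alpha)$, $2\alpha+3\beta\le l$, comes from.

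The genuine gap is in your treatment of condition (1). Matching the $G_2$-decomposition of $B$ (obtained by deleting the $0$-arrows) with the classical decomposition of the Kirillov--Reshetikhin module does \emph{not} establish that this module has a crystal basis, nor that its crystal graph carries the given $0$-arrows; existence of a crystal base for a $U_q'(\mathfrak{g})$-module is not automatic and cannot be read off from character identities. In \cite{KMOY} this is the bulk of the work: the KR module $W^{(1)}_l$ is constructed by a fusion construction, the existence of a crystal \emph{pseudobase} is proved via a polarization (prepolarization) argument using the criterion of Kang--Kashiwara--Misra--Miwa--Nakashima--Nakayashiki \cite{KMN2}, and only then is the resulting crystal identified with the combinatorial model $B^{1,l}$, including the determination of the $0$-action. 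Your plan presupposes precisely this existence statement (``the $0$-arrows providing the affinization'') rather than proving it, so as written the representation-theoretic half of the theorem --- which you correctly identify as the heart of condition (1) --- is missing, and the remaining steps, while plausible, are stated as intentions rather than carried-out verifications.
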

As was shown in \cite{KMOY}, 
the minimal elements are given by:
\[
(B)_{\min}=
\{(\alpha,\beta,\beta,\beta,\beta,\alpha)\,|\,
\alpha, \beta\in \Z_{\geq 0}, 2\alpha+3\beta\leq l\}.
\]

%%%%%%%%%%%%%%%%%%%%%%%%%%%%%%%%%%%%%%%%%%%%%%%%%%%%%%%%%%%%%%%%%%%%%%%%%%%%%%%%%%%%%%%%%%%%%%%%%%%%%%%%%%%%%%%%%%%%%%%%%%%%                                                                  

\section{Demazure Modules and Demazure crystals}
\setcounter{equation}{0}
 
Let $W$ denote the Weyl group for the affine Lie algebra $\mathfrak{g}$ generated by the simple reflections $\left\{r_{i} | i \in I \right\}$, where
$r_{i}(\mu)=\mu-\mu(h_{i})\alpha_{i}$ for all $\mu \in \mathfrak{h}^{*}$.  For $w \in W$ let $l(w)$ denote the length
of $w$ and $\prec$ denote the Bruhat order on $W$.  Let $U_q^{+}(\mathfrak{g})$ be the subalgebra of $U_q(\mathfrak{g})$ generated by the $e_{i}$'s.  For $\lambda \in P^{+}$ with $\lambda (d)= 0$,
consider the irreducible integrable highest weight  $U_q(\mathfrak{g})$-module $V(\lambda)$ with highest weight $\lambda$ and highest weight vector $u_{\lambda}$. Let $(L(\lambda), B(\lambda))$ denote the crystal basis for $V(\lambda)$ (\cite{Ka1},\cite{Ka2}).  It is
known that for $w \in W$, the extremal weight space $V(\lambda)_{w\lambda}$ is one dimensional.  Let $V_{w}(\lambda)$ denote the $U_q^{+}(\mathfrak{g})$-module generated by
$V(\lambda)_{w\lambda}$.  These modules $V_w(\lambda)$ $(w \in W)$ are called the {\it Demazure modules}.  They are finite dimensional subspaces of $V(\lambda)$ and satisfy the
properties: $V(\lambda)= \bigcup_{w \in W} V_w(\lambda)$ and for $w,w' \in W$ with $w \preceq w'$ we have $V_w(\lambda) \subseteq V_{w'}(\lambda)$.  

In 1993, Kashiwara \cite{Ka3} showed that for each $w \in W$, there exists a subset $B_{w}(\lambda)$ of $B(\lambda)$ such that
$$ \frac{ V_{w}(\lambda) \cap L(\lambda)}{ V_w(\lambda) \cap qL(\lambda)} = \bigoplus_{b \in B_w(\lambda)}\mathbb{Q}b.$$  
The set $B_w(\lambda)$ is the crystal for the Demazure module $V_w(\lambda)$.  The Demazure crystal
$B_w(\lambda)$ has the following recursive property:
\begin{equation}
\mbox{If } r_iw \succ w, \mbox{ then } B_{r_iw}(\lambda) = \bigcup_{n \ge 0} \tilde{f}_i^nB_w(\lambda) \backslash \left\{ 0 \right\}.
\label{recursive_property}
\end{equation}

Suppose $\lambda(c)=l \ge 1$, and suppose $B$ be a perfect crystal of
level $l$ for $U_q(\mathfrak{g})$.  Then the crystal $B(\lambda)$ is isomorphic to the set of paths $\mathcal{P}(\lambda)= \mathcal{P}(\lambda,B)$ (see Proposition \ref{path_realization}).  Under this isomorphism the highest weight
element $u_{\lambda} \in B(\lambda)$ is identified with the ground state path $\mathbf{p}_\lambda=(\dots \otimes b_3 \otimes b_2 \otimes b_1)$. We recall the path realizations of the Demazure crystals $B_w(\lambda)$ from \cite{KMOU}. Fix
positive integers $d$ and $\kappa$.  For a sequence of integers $\left\{ \left. i_{a}^{(j)} \right| j \ge 1, 1 \le a \le d \right\}
\subset \left\{ 0,1,\dots,n \right\}$, we define the subsets $\left\{ \left. B_a^{(j)} \right| j \ge 1, 0 \le a \le d \right\}$ of $B$ by

$$B_0^{(j)}=\left\{b_j \right\}, B_a^{(j)}= \bigcup_{k \ge 0 } \tilde{f}_{i_a^{(j)}}^kB_{a-1}^{(j)} \backslash \left\{0 \right\}.$$

Next we define $B_a^{(j+1,j)}, j \ge 1, 0 \le a \le d$ by
$$B_0^{(j+1,j)}=B_0^{(j+1)} \otimes B_d^{(j)}, B_a^{(j+1,j)}=\bigcup_{k \ge 0} \tilde{f}_{i_a^{(j+1)}}^k B_{a-1}^{(j+1,j)} \backslash 
\left\{ 0 \right\},$$
and continue until we define
\begin{eqnarray*}
B_0^{(j+ \kappa -1, \dots, j)}&=&B_0^{(j + \kappa -1)} \otimes B_d^{(j + \kappa-2, \dots, j)} \\
B_{a}^{(j + \kappa -1, \dots, j)} &=& \bigcup_{k \ge 0} \tilde{f}_{i_a^{(j+ \kappa-1)}}^kB_{a-1}^{(j + \kappa -1, \dots, j)} \backslash
\left\{0 \right\}. 
\end{eqnarray*}

Furthermore, we define a sequence $\left\{ w ^{(k)} \right\}$ of elements in the Weyl group $W$ by
$$ w^{(0)}=1, w^{(k)}=r_{i_a^{(j)}}w^{(k-1)}, \mbox{ for }k >0.$$
Here, $j$ and $a$ are uniquely determined from $k$ by the relation $k=(j-1)d + a, j \ge 1, 1 \le a \le d$.  Now for $k \ge 0$,
we define subsets $\mathcal{P}^{(k)}(\lambda,B)$ of $\mathcal{P}(\lambda,B)$ as follows:
\begin{eqnarray*}
\mathcal{P}^{(0)}(\lambda,B) &=& \left\{ \mathbf{p}_\lambda \right\}, \\
\mbox{ and for } k > 0, \\
\mathcal{P}^{(k)}(\lambda,B) &=& \begin{cases} \dots \otimes B_0^{(j+2)} \otimes B_0^{(j+1)} \otimes B_a^{(j, \dots, 1)} & \mbox{if }j < \kappa \\
\dots \otimes B_{0}^{(j+2)} \otimes B_0^{(j+1)} \otimes B_a^{(j, \dots, j-\kappa +1)} \otimes B^{\otimes (j-\kappa)} 
& \mbox{if } j \geq \kappa \end{cases}.  
\end{eqnarray*}

The following theorem shows that under certain conditions, the path realizations of the Demazure crystals $B_{w^{(k)}}(\lambda)$ is isomorphic to $\mathcal{P}^{(k)}(\lambda,B)$ and hence have tensor product-like structures.

\begin{theorem}\label{tensor} \cite{KMOU} Let $\lambda \in {\bar P}^{+}$ with $\lambda (c)= l$ and $B$ be a perfect crystal of level $l$ for the quantum affine
Lie algebra $U_q(\mathfrak{g})$.  For fixed positive integers $d$ and $\kappa$, suppose we have a sequence of integers
$\left\{ i_a^{(j)} | j \ge 1, 1 \le a \le d \right\} \subset \left\{ 0,1,2, \dots, n \right\}$ satisfying the conditions:
\begin{enumerate}
\item for any $j \ge 1 $, $B_d^{(j + \kappa -1, \dots, j)} = B_d^{(j + \kappa -1, 
\dots, j+1)}	 \otimes B$,
\item for any $j \ge 1, 1 \le a \le d$, $\left< \lambda_j, h_{i_a^{(j)}} \right> \le 
\varepsilon_{i_a^{(j)}}(b), b \in B_{a-1}^{(j)}$, and 
\item the sequence of elements $\left\{ w^{(k)} \right\}_{k \ge 0}$ is an increasing sequence with respect to the Bruhat order.
\end{enumerate}
Then we have
$B_{w^{(k)}}(\lambda) \cong \mathcal{P}^{(k)}(\lambda,B).$
\label{affine_demazure_theorem}
\end{theorem}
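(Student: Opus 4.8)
The plan is to argue by induction on $k\ge 0$, transporting everything into the semi-infinite tensor product via the path realization $\Psi_\lambda\colon B(\lambda)\stackrel{\sim}{\longrightarrow}\mathcal P(\lambda,B)$ of Proposition~\ref{path_realization} and proving $\Psi_\lambda\big(B_{w^{(k)}}(\lambda)\big)=\mathcal P^{(k)}(\lambda,B)$. For the base case $k=0$ we have $w^{(0)}=1$, so $V_{w^{(0)}}(\lambda)=\C u_\lambda$ and $B_{w^{(0)}}(\lambda)=\{u_\lambda\}$; since $\Psi_\lambda(u_\lambda)=\mathbf p_\lambda$ and $\mathcal P^{(0)}(\lambda,B)=\{\mathbf p_\lambda\}$ by definition, the claim holds.

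For the inductive step, write $k=(j-1)d+a$, so that $w^{(k)}=r_{i_a^{(j)}}w^{(k-1)}$, and note that hypothesis (3) gives $w^{(k)}\succ w^{(k-1)}$. The recursive property~\eqref{recursive_property} of Demazure crystals then yields
\[
B_{w^{(k)}}(\lambda)=\bigcup_{n\ge 0}\tilde f_{i_a^{(j)}}^{\,n}B_{w^{(k-1)}}(\lambda)\setminus\{0\}.
\]
Because $\Psi_\lambda$ is a crystal isomorphism it intertwines the Kashiwara operators, so feeding in the inductive hypothesis reduces the theorem to the single identity
\[
\bigcup_{n\ge 0}\tilde f_{i_a^{(j)}}^{\,n}\,\mathcal P^{(k-1)}(\lambda,B)\setminus\{0\}=\mathcal P^{(k)}(\lambda,B).\qquad(\ast)
\]
Writing $\mathcal F_i(S):=\bigcup_{n\ge 0}\tilde f_i^{\,n}S\setminus\{0\}$ for the Demazure operator, $(\ast)$ says precisely that $\mathcal F_{i_a^{(j)}}$ turns the active block $B_{a-1}^{(j,\dots)}$ into $B_a^{(j,\dots)}$ while fixing every other tensor factor, which is exactly the only change in passing from the definition of $\mathcal P^{(k-1)}$ to that of $\mathcal P^{(k)}$.

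The heart of the argument is therefore a factorization lemma for $\mathcal F_i$ on a tensor product: under a suitable comparison of $\varphi_i$ and $\varepsilon_i$ across adjacent factors the operator localizes on one designated factor, giving either $\mathcal F_i(S'\otimes T)=\mathcal F_i(S')\otimes T$ or $\mathcal F_i(S'\otimes T)=S'\otimes\mathcal F_i(T)$. I would derive this directly from the tensor product rule for $\tilde f_i$ by tracking the switching condition $\varphi_i(b_1)\lessgtr\varepsilon_i(b_2)$ under iteration. Hypothesis (2), $\langle\lambda_j,h_{i_a^{(j)}}\rangle\le\varepsilon_{i_a^{(j)}}(b)$ for $b\in B_{a-1}^{(j)}$, controls the junction between the frozen left tail $\dots\otimes B_0^{(j+1)}$ and the active block and forces $\tilde f_{i_a^{(j)}}$ to stay out of the frozen factors; hypothesis (1), $B_d^{(j+\kappa-1,\dots,j)}=B_d^{(j+\kappa-1,\dots,j+1)}\otimes B$, ensures that a completed block ($a=d$) splits off a genuine perfect-crystal factor $B$ on the right, which reinitializes the recursion as $j$ advances and caps the mixing window at width $\kappa$, thereby producing the two cases $j<\kappa$ and $j\ge\kappa$ in the definition of $\mathcal P^{(k)}$.

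I expect the main obstacle to be exactly this localization in the semi-infinite setting: one must show that no iterate $\tilde f_{i_a^{(j)}}^{\,n}$ leaks past the width-$\kappa$ mixing window, and that the inequality in (2) is sharp enough to prevent $\tilde f_{i_a^{(j)}}$ from acting on the wrong factor at the boundary of the active block. Once this is in place, $(\ast)$ follows by matching the block-by-block descriptions, and the induction closes.
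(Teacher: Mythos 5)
First, a point of order: the paper you were given does not prove this theorem at all. Theorem \ref{tensor} is quoted from \cite{KMOU}, and the paper's own contribution is only to verify hypotheses (1)--(3) for a specific $\lambda$, $B$, and sequence $\{i_a^{(j)}\}$ in the $D_4^{(3)}$ case. So the only meaningful comparison is with the original proof in \cite{KMOU}, and your outline does track its strategy: induction on $k$, Kashiwara's recursion \eqref{recursive_property} (available by hypothesis (3)), transport through the path realization of Proposition \ref{path_realization}, and a localization analysis of $\tilde{f}_i$ via the tensor product rule, with hypothesis (2) confining the operator to the active window and hypothesis (1) governing how the window advances.

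Judged as a proof, however, your proposal has a genuine gap, which you half-concede yourself: the ``factorization lemma'' is not a technicality to be filled in later---it is the entire mathematical content of the theorem, and everything preceding it in your writeup is formal bookkeeping. Two concrete things are missing. First, hypothesis (2) is an inequality about single crystal elements $b\in B_{a-1}^{(j)}\subset B$, whereas the localization must be applied to semi-infinite paths whose active part is a tensor $p\in B_{a-1}^{(j,\dots)}$ sitting against the frozen tail. You need: (i) that the tail $\cdots\otimes b_{j+2}\otimes b_{j+1}$ has $\varphi_i$-value $\langle \lambda_j,h_i\rangle$ at the junction, because it behaves as the highest weight vector $u_{\lambda_j}$ under the iterated isomorphism $B(\lambda_j)\cong B(\lambda_{j+1})\otimes B$; (ii) that $\varepsilon_i(p)$ dominates $\varepsilon_i$ of the leftmost tensor factor of $p$ (this does follow from the tensor product formula for $\varepsilon_i$); and---the real issue---(iii) that the leftmost factor of every element of $B_{a-1}^{(j,\dots)}$ actually lies in $B_{a-1}^{(j)}$, so that (2) applies to it. Point (iii) is not automatic: a Demazure operator $\bigcup_{n\ge 0}\tilde f_i^{\,n}(\,\cdot\,)\setminus\{0\}$ applied to a product set $S\otimes T$ need not again be of the form $S'\otimes T'$, so this factor-membership statement must be formulated and carried along as part of the induction. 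Second, your reduction $(\ast)$, phrased as ``the only change from $\mathcal{P}^{(k-1)}$ to $\mathcal{P}^{(k)}$ is inside one block,'' is literally false at block boundaries: when $a=1$ the active block is created by re-bracketing, $B_0^{(j,\dots)}=B_0^{(j)}\otimes B_d^{(j-1,\dots)}$, and precisely when $j\ge\kappa$ hypothesis (1) must be invoked to split a full factor $B$ off the right end so that the window keeps width $\kappa$; without this bookkeeping the inductive hypothesis and the desired conclusion are not even sets of the same shape---and hypothesis (1), which you correctly identify as the source of the two cases in the definition of $\mathcal{P}^{(k)}$, would never actually get used. These two items are exactly where the argument in \cite{KMOU} does its work; your outline names them but does not carry them out.
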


The positive integer $\kappa$ in Theorem \ref{affine_demazure_theorem} is called the {\it mixing index}. It is conjectured that for any affine Lie algebra $\mathfrak{g}$, the mixing index $\kappa \le 2$. It is known that the mixing index $\kappa$ is dependent on the choice of the perfect crystal (see \cite{MW}). For $ \lambda = l\Lambda$, (where $\Lambda$ is a dominant weight of level one) and the perfect crystal $B = B^{1,l}$ it is known that there exists a suitable sequence of Weyl group elements $\left\{w^{(k)}\right\}$ which satisfy the conditions in Theorem \ref{affine_demazure_theorem} with $\kappa = 1$ for $\mathfrak{g}$ any classical quantum affine Lie algebra \cite{KMOTU}, $U_q(D_4^{(3)})$ \cite{M}, and $U_q(G_2^{(1)})$ \cite{JM}.

For $b \in B$, let $\tilde{f}_i^{\rm max}(b)$ denote $\tilde{f}_i^{\varphi_i(b)}(b)$. For $j \ge 1$, we set 
$$b_0^{(j)} =b_j, \quad b_a^{(j)} = \tilde{f}_{i_a^{(j)}}^{\rm max}(b_{a-1}^{(j)}) \quad (a = 1, 2, \cdots, d).$$

The following Proposition (\cite{KMOU}, Proposition 2) will be useful to check the validity of condition $(3)$ in Theorem \ref{affine_demazure_theorem}.

\begin{proposition} \label{bruhat} \cite{KMOU} For $w \in W$, if $\langle w\mu, h_j \rangle > 0$ for some $\mu \in {\bar P}^+$, then $r_jw \succ w$.
\end{proposition}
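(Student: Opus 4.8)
\section*{Proof proposal}

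The plan is to reduce the claim to the standard combinatorial characterization of the Bruhat order in terms of the action of $w$ on roots. Since $r_jw$ and $w$ differ by a single simple reflection they are always Bruhat-comparable, the longer element being the larger one; thus $r_jw\succ w$ is equivalent to $l(r_jw)=l(w)+1$, which by the usual length criterion for Coxeter groups holds if and only if $w^{-1}\alpha_j$ is a positive real root. This criterion remains valid in the affine Kac--Moody setting because $w^{-1}\alpha_j$, being the image of a simple root under a Weyl group element, is automatically a real root and hence has a well-defined sign. So it suffices to prove that the hypothesis $\langle w\mu,h_j\rangle>0$ forces $w^{-1}\alpha_j$ to be positive.

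First I would invoke the $W$-invariance of the natural pairing between $\h^*$ and $\h$, which gives $\langle w\mu,h_j\rangle=\langle\mu,w^{-1}h_j\rangle$, so the hypothesis becomes $\langle\mu,w^{-1}h_j\rangle>0$. Because the $W$-actions on roots and coroots are compatible, $w^{-1}h_j=(w^{-1}\alpha_j)^{\vee}$, a real coroot. Now I would argue by contradiction: if $w^{-1}\alpha_j$ were a negative root, then $(w^{-1}\alpha_j)^{\vee}$ would be a negative real coroot, expressible as $-\sum_i c_i h_i$ with all $c_i\ge0$. Pairing with $\mu\in\oP^{+}$, which satisfies $\langle\mu,h_i\rangle\ge0$ for every $i$, would yield $\langle\mu,w^{-1}h_j\rangle=-\sum_i c_i\langle\mu,h_i\rangle\le0$, contradicting the strict positivity just obtained. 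Hence $w^{-1}\alpha_j$ is positive, and the length criterion gives $r_jw\succ w$.

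The individual steps are routine once the conventions are fixed, so I expect no serious difficulty in the argument itself. The main (and rather mild) obstacle is to make sure that the two classical facts I rely on --- the length/Bruhat criterion $r_jw\succ w\iff w^{-1}\alpha_j$ is a positive root, and the sign-coherence of real coroots (all simple-coroot coordinates of one sign) --- are quoted in their Kac--Moody rather than merely finite-type form; both are standard and can be cited from \cite{Kac}, but since $\g$ here is the affine algebra $D_4^{(3)}$ I would state them for real roots explicitly to avoid any ambiguity with imaginary roots.
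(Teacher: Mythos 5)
Your proof is correct, and it is essentially the expected one: the paper itself gives no argument for this proposition (it is quoted from \cite{KMOU}, Proposition 2), and the proof there is exactly this standard chain --- $\langle w\mu,h_j\rangle=\langle\mu,w^{-1}h_j\rangle>0$ with $\mu$ dominant forces $w^{-1}h_j=(w^{-1}\alpha_j)^{\vee}$ to be a positive coroot, hence $w^{-1}\alpha_j$ a positive root, hence $l(r_jw)=l(w)+1$ and $r_jw\succ w$. Your added care about stating the length criterion and sign-coherence for real roots in the Kac--Moody setting is appropriate but routine; there is no gap.
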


%%%%%%%%%%%%%%%%%%%%%%%%%%%%%%%%%%%%%%%%%%%%%%%%%
%%%%%%%%%%%%%%%%%%%%%%%%%%%%%%%%%%%%%%%%%%%%%%%%%
\section{$U_q(D_4^{(3)})$-Demazure crystals}
\setcounter{equation}{0}

In this section we show that for the perfect crystal of level $3l$ for the quantum affine algebra $U_q(D_4^{(3)})$ given in Section 3, there is a   suitable sequence of Weyl group elements $\left\{ w^{(k)} \right\}$ which satisfy the conditions  $(1), (2)$ and $(3)$ for $\lambda= l\Lambda_2$ and hence Theorem \ref{affine_demazure_theorem} holds in this case with $\kappa =1$. Thus we have path realizations of the corresponding Demazure crystals with tensor product-like structures.

For $\lambda = l \Lambda_2, l \ge 1$, the $l\Lambda_2$-minimal element in the perfect crystal $B= B^{1,3l}$ is ${\bar b} = (0, l, l, l, l, 0)$  and  in this case $\lambda_j = \lambda = l\Lambda_2$ for $j \ge 1$. Hence $b_j = {\bar b}$ for all $j \ge 1$. Set $d =6$ and choose the sequence $\{ i_a^{(j)} | j \ge 1, 1 \le a \le 6\}$ defined by 

\begin{equation}\label{sequence}
i_1^{(j)}= i_3^{(j)} = 2, i_2^{(j)}= i_4^{(j)}= i_6^{(j)}= 1,  i_5^{(j)}= 0.
\end{equation}
Hence, by the action of $\tilde{f_i}$ on $B$ we have, for $j \ge 1$
\begin{eqnarray}
\begin{array}{l}
b_0^{(j)}= (0, l, l, l, l, 0), \\
b_1^{(j)}= \tilde{f}_2^{\rm max}(b_0^{(j)})= (0, 0, 3l, l, l, 0),\\
b_2^{(j)}= \tilde{f}_1^{\rm max}(b_1^{(j)})= (0, 0, 0, 4l, l, 0),\\
b_3^{(j)}= \tilde{f}_2^{\rm max}(b_2^{(j)})= (0, 0, 0, 0, 3l, 0),\\
b_4^{(j)}= \tilde{f}_1^{\rm max}(b_3^{(j)})= (0, 0, 0, 0, 0, 3l),\\
b_5^{(j)}= \tilde{f}_0^{\rm max}(b_4^{(j)})= (3l, 0, 0, 0, 0, 0),\\
b_6^{(j)}= \tilde{f}_1^{\rm max}(b_5^{(j)})= (0, 3l, 0, 0, 0, 0),
\end{array}
\label{equation1}
\end{eqnarray}

We define conditions $P$ and $Q_j, 1 \leq j \leq 6$  for $b \in B$ as follows:

\begin{eqnarray*}
\begin{cases}
(P): z_3 \geq 0; z_3 + 3z_4 \geq (-2z_2)_+; z_1 + z_2 + z_3 +3z_4 \geq 0; t(b) < 2l; s(b) < 3l\\
(Q_1): z_3 < 0; z_4 \geq 0; z_1 +z_2 +3z_4 \geq 0; z_1 + 2z_2 + z_3 +3z_4 \geq 0; t(b) \leq 2l; s(b) \leq 3l\\
(Q_2): z_2 \geq 0; z_4 < 0; z_3 + 3z_4 < 0; z_1 + z_2 \geq 0; z_1 + 2z_2 +z_3  \geq 0; s(b) \leq 3l\\
(Q_3): z_2 \geq 0; z_4 < 0; z_3 + 3z_4 < 0; z_1 + z_2 < 0; z_2 + z_3 \geq 0; s(b) \leq 3l\\
(Q_4): z_2 \geq 0; z_3 \geq 0; z_1 + z_2 + z_3 + 3z_4 < 0; z_3 + 3z_4 \geq 0; t(b) < 2l; s(b) \leq 3l\\
(Q_5): x_1 > 0; z_3 \geq 0; z_3 + 3z_4 \geq 0; z_1 + z_2 + z_3 + 3z_4 \geq 0; z_1 + 2z_2 + z_3 + 3z_4 \geq 0;\\
\qquad \ \ t(b) < 2l; s(b) \leq 3l\\ 
(Q_6): x_1 > 0; z_3 < 0; z_4 \geq 0; z_1 + z_2 + 3z_4 < 0; z_2 + 3z_4 > 0; z_2 + z_3 \geq 0; t(b) < 2l; s(b) \leq 3l
\end{cases}
\end{eqnarray*}

By direct calculations it can be seen that the subsets $\left\{ \left. B_a^{(j)} \right| j \ge 1, 0 \le a \le 6 \right\}$ of $B$ are given as follows.

\begin{eqnarray*}
\begin{array}{l}
B_0^{(j)} = \{(0, l, l, l, l, 0)\} \\
B_1^{(j)} = B_0^{(j)} \cup \{(0, x_2, x_3, l, l, 0) \mid  z_3 > 0, s(b) = 3l\} \\
B_2^{(j)} = B_1^{(j)} \cup \{(0, x_2, x_3, {\bar x}_3, l, 0) \mid  z_2 < 0, z_3 \geq 0, s(b) = 3l\} \\
B_3^{(j)} = B_2^{(j)} \cup \{(0, x_2, x_3, {\bar x}_3, {\bar x}_2, 0) \mid z_3 \geq 0, z_3 + 3z_4 \geq 0, t(b) < 2l, s(b) = 3l\} \\
B_4^{(j)} = B_3^{(j)} \cup \{(0, x_2, x_3, {\bar x}_3, {\bar x}_2, {\bar x}_1) \mid {\bar x}_1 > 0, z_3 \geq 0, z_3 + 3z_4 \geq (-2z_2)_+, t(b) < 2l, s(b) = 3l\} \\
B_5^{(j)} = B_4^{(j)} \cup C \cup D_1 \cup D_2 \cup \ldots D_6\\
B_6^{(j)} = B,
\end{array}
\end{eqnarray*}
where $C = \{(0, x_2, x_3, {\bar x}_3, {\bar x}_2, {\bar x}_1) \mid (P) \ \  {\rm holds}\}$ and for $1 \leq j \leq 6$, \\ $D_j = \{(x_1, x_2, x_3, {\bar x}_3, {\bar x}_2, {\bar x}_1) \mid (Q_j) \ \  {\rm holds}\}$.

The following lemma is useful and follows by easy calculations.

\begin{lemma}\label{weyl}
Let $k \in \Z _{>0}$ and $k = 6(j-1) + a, j\geq 1, 1 \leq a \leq 6.$  Then we have $w^{(k)}\Lambda_{2} = \Lambda_{2} - m_{0}\alpha_{0} - m_{1}\alpha_{1} - m_{2}\alpha_{2}$ where\\
$m_{0} = 
\begin{cases}
3j^{2} + 3j & \text{if } a = 1, 2, 3, 4 \\
3j^2 + 9j +6  & \text{if } a = 5, 6
\end{cases}$\\
$m_{1} = 
\begin{cases}
6j^{2} + 3j,     & \text{if } a = 1 \\
6j^{2} + 9j + 3,     & \text{if } a = 2, 3 \\
6j^{2}  + 12j + 6,     & \text{if } a = 4,5 \\
3j^{2} + 15j + 9,     & \text{if } a = 6 \\
\end{cases}$\\
$m_{2} = 
\begin{cases}
3j^{2} + 3j +1,     & \text{if } a = 1,2 \\
3j^{2} + 6j +3,     & \text{if } a = 3, 4, 5,6 \\
\end{cases}$
\end{lemma}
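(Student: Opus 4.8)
The plan is to prove the formula by induction on $j$, tracking the coordinates of $w^{(k)}\Lambda_2$ in the root lattice as the simple reflections are applied one at a time. Since the simple roots $\alpha_0,\alpha_1,\alpha_2$ are linearly independent, any weight $w\Lambda_2$ can be written uniquely as $w\Lambda_2=\Lambda_2-m_0\alpha_0-m_1\alpha_1-m_2\alpha_2$ with $m_i\in\Z$, so it suffices to follow the integer triple $(m_0,m_1,m_2)$. By the recursive definition $w^{(k)}=r_{i_a^{(j)}}w^{(k-1)}$ we have $w^{(k)}\Lambda_2=r_{i_a^{(j)}}\bigl(w^{(k-1)}\Lambda_2\bigr)$, and since $r_i(\mu)=\mu-\langle\mu,h_i\rangle\,\alpha_i$, applying $r_i$ alters only the coordinate $m_i$, via $m_i\mapsto m_i+\langle\mu,h_i\rangle$ with all other coordinates fixed.

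First I would make these updates explicit. Using $\Lambda_2(h_i)=\delta_{i2}$ and $\alpha_l(h_i)=a_{il}$, one computes for $\mu=\Lambda_2-m_0\alpha_0-m_1\alpha_1-m_2\alpha_2$ that $\langle\mu,h_0\rangle=-2m_0+m_1$, $\langle\mu,h_1\rangle=m_0-2m_1+3m_2$, and $\langle\mu,h_2\rangle=1+m_1-2m_2$. Hence the three reflections act on coordinate triples by
\begin{align*}
r_0 &: (m_0,m_1,m_2)\longmapsto (m_1-m_0,\; m_1,\; m_2),\\
r_1 &: (m_0,m_1,m_2)\longmapsto (m_0,\; m_0-m_1+3m_2,\; m_2),\\
r_2 &: (m_0,m_1,m_2)\longmapsto (m_0,\; m_1,\; 1+m_1-m_2).
\end{align*}
By the choice \eqref{sequence} the reflections repeat with period six in the pattern $r_2,r_1,r_2,r_1,r_0,r_1$ as $a$ runs over $1,\dots,6$ within each block. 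I would then compose these six maps to obtain a single transition sending the triple at the end of one block to the triple at the end of the next, recording the five intermediate triples (for $a=1,\dots,5$) along the way. The base case is the direct evaluation of the first block by iterating the rules from $(0,0,0)$; the inductive step substitutes the claimed closed forms at the end of block $j-1$, pushes them symbolically through the six update rules, and checks that the results reproduce the claimed forms for $a=1,\dots,6$ at step $j$.

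The conceptual content is light; the main obstacle is purely the bookkeeping. Because the $r_1$-update carries the factor $+3m_2$ coming from $a_{12}=-3$, the coordinates grow quadratically in $j$, and the intermediate triples are themselves quadratic polynomials in $j$ that must be kept organized so that each of the scalar identities — three coordinates for each of the six values of $a$ — matches after simplification. I would organize the verification as a single pass through the period with the inductive hypothesis substituted, treating the $m_0$, $m_1$, and $m_2$ polynomials separately. The likeliest source of sign or coefficient slips is the repeated application of the $r_1$ rule (used at $a=2,4,6$), so I would cross-check those three steps against the small-$j$ numerical values before declaring the induction complete.
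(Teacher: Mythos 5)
Your setup is exactly the calculation the paper has in mind (the paper offers no proof of this lemma beyond ``follows by easy calculations''): the update rules $r_0\colon m_0\mapsto m_1-m_0$, $r_1\colon m_1\mapsto m_0-m_1+3m_2$, $r_2\colon m_2\mapsto 1+m_1-m_2$ are correct for the Cartan data of $D_4^{(3)}$, as is the period-six pattern $r_2,r_1,r_2,r_1,r_0,r_1$ coming from (\ref{sequence}). The gap is in the final step of your plan, the claim that iterating these rules ``reproduces the claimed forms.'' It does not, and the small-$j$ numerical cross-check you postpone to the end would have shown this at once. Iterating from $(0,0,0)$ gives $w^{(1)}\Lambda_2=r_2\Lambda_2=\Lambda_2-\alpha_2$, i.e.\ $(m_0,m_1,m_2)=(0,0,1)$, whereas the statement at $k=1$ (that is, $j=1$, $a=1$) asserts $(6,9,7)$; the first block ends at $(6,9,3)$ and the second at $(18,30,12)$. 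In general the displayed polynomials give the coordinates of $w^{(6j+a)}\Lambda_2$, not of $w^{(6(j-1)+a)}\Lambda_2$: they match the true values only after the substitution $j\mapsto j-1$. So the base case of your induction fails, and the statement as literally written cannot be proved by your (or any) argument.

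There is a second, independent defect that would break even your symbolic inductive step: the $a=6$ entry of $m_1$ must be $6j^2+15j+9$, not $3j^2+15j+9$. With the stated coefficient the formulas are not self-consistent under your one-block transition: the $a=6$ triple at index $j-1$ has $m_1=3j^2+9j-3$, $m_2=3j^2$, and the $r_2$ rule then produces $m_2=1+(3j^2+9j-3)-3j^2=9j-2$ rather than the claimed $3j^2+3j+1$; with coefficient $6$ one gets $m_1=6j^2+3j$ and hence $m_2=1+(6j^2+3j)-3j^2=3j^2+3j+1$ as required. (That $6$ is what the authors actually used is forced by the proof of the paper's final theorem, where $\langle w^{(k)}\Lambda_2,h_2\rangle=3j+4$ for $a=6$ holds only with the corrected entry.) To salvage your argument, restate the lemma with $k=6j+a$, $j\ge 0$, $1\le a\le 6$ (equivalently replace $j$ by $j-1$ in all formulas) and correct the $a=6$ entry; then your base case is $(0,0,1)$ at $j=0$, $a=1$, your one-block transition closes symbolically, and the induction you outline is complete. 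Both errors are harmless for the paper's main theorem, since the relevant pairings remain positive either way, but they make the lemma false as stated, and a referee-ready version of your proof must say so rather than assert that the verification succeeds.
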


\begin{theorem} For $\lambda = l\Lambda_2$, $l \ge 1$ and the given perfect crystal $B = B^{1, 3l}$ for the quantum affine algebra $U_q(D_4^{(3)})$ with $d = 6$ and the  sequence $\{i_a^{(j)}\}$ given in (\ref{sequence}), conditions (1), (2) and (3) in Theorem \ref{tensor} hold with $\kappa = 1$. Hence we have path realizations of the corresponding Demazure crystals $B_{w^{(k)}}(l\Lambda_2)$ for  $U_q(D_4^{(3)})$ 
with tensor product-like structures.
\end{theorem}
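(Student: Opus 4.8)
The plan is to verify the three numbered hypotheses of Theorem~\ref{tensor} one at a time for $\lambda = l\Lambda_2$, $d = 6$, $\kappa = 1$ and the sequence $(\ref{sequence})$, taking as input the explicit descriptions of the subsets $B_a^{(j)}$ and the formula for $w^{(k)}\Lambda_2$ recorded in Lemma~\ref{weyl}. Note first that, since $\lambda_j = \lambda$ and $b_j = \bar{b}$ for all $j \ge 1$, the sets $B_a^{(j)}$ are in fact independent of $j$, which streamlines the bookkeeping. Because $\kappa = 1$ the multi-indices collapse: the left-hand side of condition (1) is $B_6^{(j)}$, while on the right-hand side the descending index in $B_6^{(j+\kappa-1,\ldots,j+1)}$ is empty, so the combined crystal is the unit and $B_6^{(j+\kappa-1,\ldots,j+1)} \otimes B$ reduces to the single fresh factor $B$. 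Thus condition (1) is precisely the assertion $B_6^{(j)} = B$, which is the last line of the explicit list of the $B_a^{(j)}$; once those descriptions are in hand, (1) is immediate.

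For condition (2) I would first record that $\langle l\Lambda_2, h_i \rangle = l\,\delta_{i2}$, so the required inequality is vacuous whenever $i_a^{(j)} \in \{0,1\}$ — that is, for $a \in \{2,4,5,6\}$ — because $\varepsilon_i(b) \ge 0$ for every $b$ and $i$. The only substantive cases are $a = 1$ and $a = 3$, where $i_a^{(j)} = 2$ and the bound to be met is $\varepsilon_2(b) \ge l$ for $b$ ranging over $B_0^{(j)}$ and $B_2^{(j)}$ respectively. Here I would use $\varepsilon_2(b) = \bar{x}_2 + \frac{1}{2}(x_3 - \bar{x}_3)_+$ together with the fact, read off from the explicit descriptions, that every element of $B_0^{(j)}$ and of $B_2^{(j)}$ has $\bar{x}_2 = l$; hence $\varepsilon_2(b) \ge \bar{x}_2 = l$, as needed.

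For condition (3) I would invoke Proposition~\ref{bruhat} with $\mu = \Lambda_2 \in \bar{P}^{+}$: since $w^{(k)} = r_{i_a^{(j)}} w^{(k-1)}$, it suffices to show $\langle w^{(k-1)}\Lambda_2, h_{i_a^{(j)}} \rangle > 0$ for every $k = 6(j-1) + a \ge 1$. Writing $w^{(k-1)}\Lambda_2 = \Lambda_2 - m_0\alpha_0 - m_1\alpha_1 - m_2\alpha_2$ and pairing against $h_i$ via $\langle \alpha_s, h_i \rangle = a_{is}$ and the Cartan matrix $A$ gives
\[
\langle w^{(k-1)}\Lambda_2, h_0 \rangle = m_1 - 2m_0, \quad \langle w^{(k-1)}\Lambda_2, h_1 \rangle = m_0 - 2m_1 + 3m_2, \quad \langle w^{(k-1)}\Lambda_2, h_2 \rangle = 1 + m_1 - 2m_2.
\]
Substituting the coefficients of $w^{(k-1)}$ supplied by Lemma~\ref{weyl} — taking care, in the case $a = 1$, to read those coefficients from the preceding block $(j-1,6)$, and from $w^{(0)} = 1$ when $j = 1$ — reduces condition (3) to a finite list of polynomial inequalities in $j$, one for each residue $a \in \{1,\ldots,6\}$, each of which I would confirm is strictly positive for all $j \ge 1$. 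By Proposition~\ref{bruhat} this yields $w^{(k)} \succ w^{(k-1)}$ for every $k$, i.e. the increasing chain required in (3).

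I expect the main obstacle to lie not in conditions (2) and (3), which become finite verifications once the inputs are fixed, but in establishing the explicit descriptions of the subsets $B_a^{(j)}$ themselves — in particular that the chain $B_0^{(j)} \subset B_1^{(j)} \subset \cdots \subset B_6^{(j)}$ terminates exactly at $B_6^{(j)} = B$. This demands a careful analysis of the iterated action of $\tilde{f}_{i_a^{(j)}}$ on $B^{1,3l}$ and a proof that each image is captured precisely by the conditions $(P)$ and $(Q_1),\ldots,(Q_6)$; this computation is the engine behind both (1) and (2). A secondary subtlety in (3) is the correct propagation of the weight $w^{(k)}\Lambda_2$ across block boundaries, where uniform positivity in $j$ must be established rather than merely checked for small $k$.
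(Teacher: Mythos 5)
Your proposal is correct and follows essentially the same route as the paper: condition (1) from the explicit computation $B_6^{(j)} = B$, condition (2) by splitting into the trivial cases $i_a^{(j)} \in \{0,1\}$ and the cases $a = 1,3$ where $\bar{x}_2 = l$ forces $\varepsilon_2(b) \ge l$, and condition (3) by pairing the weights from Lemma~\ref{weyl} against the $h_i$ via the Cartan matrix and invoking Proposition~\ref{bruhat} (the paper records the resulting values $3j+2,\,3j+3,\,3j+4,\,6j+3,\,6j+6$, confirming the positivity you defer). Your closing remark is also accurate: the real work is the direct calculation of the sets $B_a^{(j)}$, which the paper, like you, treats as input established before the theorem.
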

\begin{proof}
We have already shown above by explicit descriptions of the subsets $B_a^{(j)}$ that $B_6^{(j)} = B$ which implies that condition $(1)$  in Theorem \ref{tensor} holds for $\kappa = 1$.

Observe that  $\left<l\Lambda_2 , h_{i_a^{(j)}}\right> = 0 \le \varepsilon_{i_a^{(j)}}(b)$ for all $b \in B_{a-1}^{(j)}, a = 2, 4, 5, 6$.  Also $\left<l\Lambda_2 , h_{i_a^{(j)}}\right> = l$ for $a = 1, 3$. 
Observe that for all $b = (x_1,x_2,x_3,{\bar x}_3,{\bar x}_2,{\bar x}_1) \in B_0^{(j)} \ \ {\rm or} \ \ B_2^{(j)}$ we have ${\bar x}_2 = l$. Hence $\varepsilon_{i_a^{(j)}}(b) = {\bar x}_2 + \frac{1}{2}(x_3 - {\bar x}_3)_+ = l + \frac{1}{2}(x_3 - {\bar x}_3)_+ \geq l$ for $a = 1, 3$ and condition $(2)$ holds.

To prove condition (3), we use Lemma \ref{weyl} to obtain:\\
$\left <w^{(k)}\Lambda_{2}, h_{i_{a+1}^{(j)}} \right > \ = 
\begin{cases}
3j + 2    & \text{if } a = 2 \\
3j + 3     & \text{if } a = 3, 5\\
3j + 4 & \text{if } a = 6\\
6j + 3     & \text{if } a = 1,\\
6j + 6     & \text{if } a = 4,\\
\end{cases}$\\
where $k= 6(j-1) + a, j \geq 1$. Hence $\left <w^{(k)}\Lambda_{2}, h_{i_{a+1}^{(j)}} \right > >0 $ for all $ j \geq 1$. Therefore, by Proposition \ref{bruhat}, $w^{(k+1)}=r_{i_{a+1}^{(j)}}w^{(k)} \succ w^{(k)}$, which implies that condition $(3)$ holds.

\end{proof}

\bibliographystyle{amsalpha}

%    Insert the bibliography data here.

\end{document}